\newcommand{\bbC}{{\mathbb{C}}}
\numberwithin{equation}{section}
\newtheorem{Lem}[equation]{Lemma}
\newtheorem{Def}[equation]{Definition}
\newtheorem{Thm}[equation] {Theorem}
\title
[
Higher Order Weierstrass Points
]
{
  On Higher Order Weierstrass Points on $X_0(N)$
} 
\author{Damir Miko\v c and Goran Mui\'c }
\address{Department of Teacher Education Studies in Gospi\' c,
  University of Zadar, dr. Ante Star\v cevi\' ca 12,
53000 Gospi\' c, Croatia}
\email{damir.mikoc@gmail.com}
\address{
Department of Mathematics, Faculty of Science, 
University of Zagreb,
Bijeni\v cka 30, 10000 Zagreb,
Croatia}
 \email{gmuic@math.hr}
\begin{document}

\begin{abstract}
  Let $\Gamma$ be the Fuchsian group of the first kind. For an even integer $m\ge 4$,  we describe the space $H^{m/2}\left(\mathfrak R_\Gamma\right)$ of  $m/2$--holomorphic differentials in terms of a subspace
  $S_m^H(\Gamma)$  of the space of   (holomorphic) cuspidal modular forms $S_m(\Gamma)$. This generalizes classical isomorphism $S_2(\Gamma)\simeq H^{1}\left(\mathfrak R_\Gamma\right)$. We study the properties
  of $S_m^H(\Gamma)$. As an application,  we describe the algorithm implemented in SAGE for testing if a cusp at $\infty$ for non--hyperelliptic  $X_0(N)$ is a $\frac{m}{2}$--Weierstrass point.
  \end{abstract}
\subjclass[2000]{11F11}
\keywords{Wronskians, holomorphic differentials, cuspidal modular forms}
\thanks{The  author acknowledges Croatian Science Foundation grant IP-2018-01-3628.}
\maketitle

\section{Introduction}

Let $\Gamma$ be the Fuchsian group of the first kind  \cite[Section 1.7, page 28]{Miyake}. Examples of such groups are the important Hecke congruence groups   $\Gamma_0(N)$, $N\ge 1$.
Let $\Bbb H$ be the complex upper half-plane.  The quotient $\Gamma\backslash \Bbb H$ can be compactified by adding a finite number of $\Gamma$-orbits of points in $\mathbb R\cup \{\infty\}$ called cusps
of $\Gamma$ and we obtain a compact Riemann surface which will be denoted by $\mathfrak{R}_\Gamma$. For $\Gamma=\Gamma_0(N)$, we let $X_0(N)= \mathfrak{R}_\Gamma$.
For $x\in \mathbb H$ or  $x\in \mathbb R\cup \{\infty\}$ a cusp for $\Gamma$, let $\mathfrak a_x$ be the $\Gamma$--orbit of $x$ i.e.,
the corresponding point in  $\mathfrak{R}_\Gamma$. For $l\ge 1$, let $H^l\left(\mathfrak R_\Gamma\right)$ be the space of all holomorphic differentials on $\mathfrak{R}_\Gamma$ (see \cite{FK}, or
Section \ref{mhd}) in this paper).

Let $m\ge 2$ be an even integer.  Let
 $S_m(\Gamma)$ be the space of (holomorphic) cusp forms of weight $m$. It is well--known that
$S_2(\Gamma)$ is naturally isomorphic to the vector space $H^1\left(\mathfrak R_\Gamma\right)$ (see \cite[Theorem 2.3.2]{Miyake}). This is employed on many instances in studying various properties of
modular curves (see for example \cite[Chapter 6]{ono}).  In this paper we study the generalization of this concept to the holomorphic differentials of higher order.

For an even integer $m\ge 4$, the space $S_m(\Gamma)$ is usually too big to be isomorphic to $H^{m/2}\left(\mathfrak R_\Gamma\right)$ due to presence of cusps and elliptic
points. So, in general, $H^{m/2}\left(\mathfrak R_\Gamma\right)$ corresponds to a subspace $S^H_m(\Gamma)$ of $S_m(\Gamma)$ (see Lemma \ref{mhd-8000}). 
We study the space  $S^H_m(\Gamma)$ in detail in Section \ref{mhd-cont} (see Lemma \ref{mhd-9}).

We recall in Section \ref{mhd} (see Definition \ref{mhd-def}) that  $\mathfrak a \in \mathfrak R_\Gamma$ is a $m/2$-Weierstrass point if
there exists a non--zero $\omega\in H^{m/2}\left(\mathfrak R_\Gamma\right)$ such that
$\nu_{\mathfrak a}(\omega)\ge \dim H^{m/2}\left(\mathfrak R_\Gamma\right)$. 
When $m=2$ we speak about classical Weierstrass points. So, $1$-Weierstrass points are simply Weierstrass points. Weierstrass points on modular curves are very-well studied
(see for example \cite[Chapter 6]{ono}, \cite{neeman}, \cite{Ogg}, \cite{pete-1}, \cite{pete-2}, \cite{roh}, \cite{rd}). Higher--order Weierstrass points has not been
not studied much (see for example \cite{neeman}, \cite{olsen}).

\vskip .2in
The case $m\ge 4$ is more complex. It is studied in Section \ref{algo}. We recall that $\mathfrak R_\Gamma$ is hyperelliptic if $g(\Gamma)\ge 2$, and there is a degree two map onto $\mathbb P^1$.
Under the assumptions that  $\mathfrak R_\Gamma$ is not hyperelliptic and  that $\mathfrak a_\infty$ is a cusp for $\Gamma$, we develop a criterion that
$\mathfrak a_\infty$ is  a   $\frac{m}{2}$--Weierstrass point for $\mathfrak R_\Gamma$ (see Theorem \ref{cts-500000}). The Section \ref{algo} is mainly  devoted to the proof of
Theorem \ref{cts-500000}. For $\Gamma=\Gamma_0(N)$, $X_0(N)$  is not hyperelliptic for most of values of $N$ (see below), and Theorem \ref{cts-500000} contains an algorithm for testing that
$\mathfrak a_\infty$ is  a   $\frac{m}{2}$--Weierstrass point. We illustrate this by several examples (see the end of  Section \ref{algo}). 

Let $\Gamma=\Gamma_0(N)$, $N\ge 1$. We recall that  $g(\Gamma_0(N))\ge 2$ unless
$$
\begin{cases}
N\in\{1-10, 12, 13, 16, 18, 25\} \ \ \text{when $g(\Gamma_0(N))=0$, and}\\
N\in\{11, 14, 15, 17, 19-21, 24, 27, 32, 36, 49\} \ \ \text{when
	$g(\Gamma_0(N))=1$.}
\end{cases}
$$
Let $g(\Gamma_0(N))\ge 2$. Then,  we remark that Ogg \cite{Ogg} has determined all $X_0(N)$ which are hyperelliptic curves.
In view of Ogg's paper, we see that $X_0(N)$ is {\bf not hyperelliptic} for
$N\in \{34, 38, 42, 43, 44, 45, 51-58, 60-70\}$ or $N\ge 72$. This implies $g(\Gamma_0(N))\ge 3$.

In this paper we continue our earlier approach in studying various aspects of modular curves (\cite{Kodrnja1}, \cite{Muic1}, \cite{Muic2}, \cite{MuicKodrnja}, \cite{MuicMi}).
This paper contains large parts of previous manuscript   \cite{MuicMi1}. The rest of manuscript  \cite{MuicMi1}, related to cups forms constructed out of Wronskians, would be extended and
published separately since does not fit here. 

We would  like to thank I. Kodrnja for her help with the SAGE system. Also, we would like to thank M. Kazalicki and F. Najman for some useful discussions about modular
forms and curves in general.

\section{Preliminaries I: Holomorphic Differentials and $m$--Weierstrass Points}\label{mhd}
The goal of the present section is to recall necessary facts about holomorphic differentials and $m$--Weierstrass Points on a general compact Riemann surface, phrased in terms of
$\mathfrak R_\Gamma$ where  $\Gamma$ is  a Fuchsian group of the first kind. We let $D^m\left(\mathfrak R_\Gamma\right)$ (resp., $H^m\left(\mathfrak R_\Gamma\right)$)be the space of meromorphic (resp., holomorphic)
differential of degree $m$ on $\mathfrak R_\Gamma$ for each $m\in \mathbb Z$. We recall that $D^0\left(\mathfrak R_\Gamma\right)=\mathbb C\left(\mathfrak R_\Gamma\right)$, and
$D^m\left(\mathfrak R_\Gamma\right)\neq 0$ for all other  $m\in \mathbb Z$. In fact, if we fix a non--zero $\omega \in  D^1\left(\mathfrak R_\Gamma\right)$, then
$D^m\left(\mathfrak R_\Gamma\right)=\mathbb C\left(\mathfrak R_\Gamma\right)\omega^n$. We have  the following:
\begin{equation}\label{mhd-1}
  \deg{\left(\mathrm{div}{(\omega)}\right)}= 2m(g(\Gamma)-1), \ \ \omega \in D^m\left(\mathfrak R_\Gamma\right),  \ \omega\neq 0.
\end{equation}

We shall be interested in the case $m\ge 1$, and in holomorphic differentials. We recall \cite[Proposition III.5.2]{FK} that 
\begin{equation}\label{mhd-2}
  \dim H^m\left(\mathfrak R_\Gamma\right)=
  \begin{cases}  0 \ \ &\text{if} \ \ m\ge 1, g(\Gamma)=0;\\
     g(\Gamma)    \ \ &\text{if} \ \ m=1, \ g(\Gamma)\ge 1;\\
   g(\Gamma)    \ \ & \text{if} \ \ m\ge 2, \ g(\Gamma)= 1;\\
     (2m-1)\left(g\left(\mathfrak R_\Gamma\right)-1\right)    \ \ &\text{if} \ \ m\ge 2, \ g(\Gamma)\ge 2.\\           
  \end{cases}
  \end{equation}

This follows easily from Riemann-Roch theorem. Recall that a canonical class $K$ is simply a divisor on any non--zero meromorphic form $\omega$ on $\mathfrak R_\Gamma$. Different choices of a
$\omega$ differ by a divisor of a non--zero function $f\in \mathbb C\left(\mathfrak R_\Gamma\right)$
$$
\mathrm{div}{(f\omega)}=\mathrm{div}{(f)}+ \mathrm{div}{(\omega)}.
$$
Different choices of $\omega$ have the same degree since $\deg{\left(\mathrm{div}{(f)}\right)}=0$.

For a divisor $\mathfrak a$, we let
$$
L(\mathfrak a)=\left\{f\in \mathbb C\left(\mathfrak R_\Gamma\right); \ \ f=0 \ \text{or} \ \mathrm{div}{(f)}+ \mathfrak a\ge 0\right\}.
$$
We have the following three facts:
\begin{itemize}
\item[(1)] for $\mathfrak a=0$, we have $L(\mathfrak a)=\mathbb C$;
\item[(2)] if $\deg{(\mathfrak a)}< 0$, then  $L(\mathfrak a)=0$;
  \item[(3)] the Riemann-Roch theorem: $\dim L(\mathfrak a)= \deg{(\mathfrak a)}-g(\Gamma)+1 + \dim L(K-\mathfrak a)$.
  \end{itemize}

Now, it is obvious that   $f\omega^m \in H^m\left(\mathfrak R_\Gamma\right)$ if and only if
$$
\mathrm{div}{(f\omega^m)}=\mathrm{div}{(f)}+m\mathrm{div}{(\omega)}= \mathrm{div}{(f)}+mK\ge 0.
$$
Equivalently, $f\in L(mK)$. Thus, we have that $\dim H^m\left(\mathfrak R_\Gamma\right)=  \dim L(mK)$. Finally, by the Riemann-Roch theorem, we have the following:
$$
\dim L(mK)=\deg{(mK)}-g(\Gamma)+1 + \dim L((1-m)K)=(2m-1)(g\left(\mathfrak R_\Gamma\right)-1)+ \dim L((1-m)K).
$$
Now, if $g(\Gamma)\ge 2 $, then  $\deg{(K)}=2(g(\Gamma)-1)>0$, and the claim easily follows from (1) and (2) above.
Next, assume that $g(\Gamma)=1$. If $\omega\in \dim H^1\left(\mathfrak R_\Gamma\right)$ s non--zero, then it has a degree zero.
Thus, it has no zeroes. This means that $\omega H^{l-1}\left(\mathfrak R_\Gamma\right)=H^l\left(\mathfrak R_\Gamma\right)$ for all $l\in \mathbb Z$. But since obviously $H^0\left(\mathfrak R_\Gamma\right)$ consists
of constants only, we obtain the claim. Finally, the case $g(\Gamma)=0$ is obvious from (2) since the degree of $mK$ is $2m(g(\Gamma)-1)<0$ for all $m\ge 1$.

\vskip .2in

Assume that  $g(\Gamma)\ge 1$ and $m\ge 1$. Then,  $\dim H^m\left(\mathfrak R_\Gamma\right)\neq 0$. Let $t= \dim H^m\left(\mathfrak R_\Gamma\right)$. We fix the basis
$\omega_1, \ldots, \omega_t$ of  $H^m\left(\mathfrak R_\Gamma\right)$. Let $z$ be any local coordinate on $\mathfrak R_\Gamma$. Then, locally there exists unique holomorphic functions
$\varphi_1, \ldots, \varphi_t$ such that $\omega_i=\varphi_i \left(dz\right)^m$, for all $i$. Then, again locally,
we can consider the Wronskian $W_z$ defined by
\begin{equation}\label{mhd-3}
W_z\left(\omega_1, \ldots, \omega_t\right)\overset{def}{=}\left|\begin{matrix}
\varphi_1(z) &
\cdots & \varphi_{t}(z) \\
\frac{d\varphi_1(z)}{dz} &
\cdots & \frac{d\varphi_{t}(z)}{dz} \\
&\cdots & \\
\frac{d^{t-1}\varphi_1(z)}{dz^{k-1}} &
\cdots & \frac{d^{t-1}\varphi_{t}(z)}{dz^{t-1}} \\
\end{matrix}\right|. 
\end{equation}

As proved in \cite[Proposition III.5.10]{FK}, collection of all
\begin{equation}\label{mhd-4}
W_z\left(\omega_1, \ldots, \omega_t\right)\left(dz\right)^{\frac{t}{2}\left(2m-1+t\right)} ,
\end{equation}
defines a non--zero holomorphic differential form
$$
W\left(\omega_1, \ldots, \omega_t\right)\in H^{\frac{t}{2}\left(2m-1+t\right)}\left(\mathfrak R_\Gamma\right).
$$
We call this form the Wronskian of the basis $\omega_1, \ldots, \omega_t$. It is obvious that a different choice of a basis of
$H^m\left(\mathfrak R_\Gamma\right)$ results in a Wronskian which differ from $W\left(\omega_1, \ldots, \omega_t\right)$ by a multiplication by
a non--zero complex number. Also,  the degree is given by
\begin{equation}\label{mhd-5a}
\deg{\left(\mathrm{div}{\left(W\left(\omega_1, \ldots, \omega_t\right)\right)}\right)}= t\left(2m-1+t\right)(g\left(\mathfrak R_\Gamma\right)-1).
\end{equation}

\vskip .2in
Following \cite[III.5.9]{FK}, we make the following definition:

\begin{Def}\label{mhd-def}  Let $m\ge 1$ be an integer. We say that $\mathfrak a \in \mathfrak R_\Gamma$ is a $m$-Weierstrass point if
there exists a non--zero $\omega\in H^m\left(\mathfrak R_\Gamma\right)$ such that
$$
\nu_{\mathfrak a}(\omega)\ge \dim H^m\left(\mathfrak R_\Gamma\right).
$$
\end{Def}
Equivalently \cite[Proposition III.5.10]{FK} , if 
$$
\nu_{\mathfrak a}\left(W\left(\omega_1, \ldots, \omega_t\right)\right)\ge 1.
$$
When $m=1$ we speak about classical Weierstrass points. So, $1$-Weierstrass points are simply Weierstrass points.

\section{Preliminaries II: Interpretation in Terms of Modular Forms} \label{mhd-cont}

In this section we give interpretation of results of Section \ref{mhd} in terms of modular forms.
Again, $\Gamma$ stand for a Fuschsian group of the first kind. Let $m\ge 2$ be an even integer. We consider the space
$\mathcal A_m(\Gamma)$ be the space of all all meromorphic functions $f: \mathbb H\rightarrow \bbC$ 
such that $f(\gamma.z)=j(\gamma, z)^m f(z)$  ($z\in \mathbb H$, $\gamma\in \Gamma$) which are meromorphic at
every cusp for $\Gamma$. By \cite[Theorem 2.3.1]{Miyake}, there exists isomorphism of vector spaces $\mathcal A_m(\Gamma)\longrightarrow D^{m/2}\left(\mathfrak R_\Gamma\right)$,
denoted by $f\longmapsto \omega_f$ such that the following holds (see \cite[Theorem 2.3.3]{Miyake}):

\begin{equation}\label{mhd-5}
  \begin{aligned}
  &\nu_{\mathfrak a_\xi}(f)=\nu_{\mathfrak a_\xi}(\omega_f) +\frac{m}{2}\left(1- \frac{1}{e_{\mathfrak a_\xi}}\right) \ \ \text{if} \ \xi\in \mathbb H\\
    &\nu_{\mathfrak a}(f)=\nu_{\mathfrak a}(\omega_f) + \frac{m}{2} \ \ \text{for $\Gamma$---cusp $\mathfrak a$.}\\
    & \mathrm{div}{(f)}=\mathrm{div}{(\omega_f)}+\sum_{\mathfrak a \in \mathfrak R_\Gamma} \frac{m}{2}\left(1- \frac{1}{e_{\mathfrak a}}\right) \mathfrak a,
     \end{aligned}
\end{equation}
where $1/e_{\mathfrak a}=0$ if $\mathfrak a$ is a cusp, and $\mathfrak a_\xi\in \mathfrak R_\Gamma$ is the projection of $\xi$
to $\mathfrak R_\Gamma$.  Let $f\in M_m(\Gamma)$ (a space of modular forms). Using (\ref{mhd-5}), we obtain

\begin{equation}\label{mhd-6}
\mathrm{div}{(\omega_f)}=\mathfrak c' _f -\sum_{\mathfrak a\in \mathfrak
R_\Gamma, \ elliptic} \left[\frac{m}{2}(1-1/e_{\mathfrak  a})\right]\mathfrak a - \frac{m}{2}\sum_{\mathfrak b\in \mathfrak
R_\Gamma, \ cusp}  \mathfrak b,
\end{equation}
where as usual in our previous papers (see for example \cite[Lemma 2.2]{MuicKodrnja}), $\mathfrak c'_f$ is an effective integral divisor
which satisfies
\begin{equation}\label{mhd-60000000000000000}
\mathrm{div}{(f)}=  \mathfrak c'_f+ \sum_{\mathfrak a\in \mathfrak
R_\Gamma, \ \ elliptic} \left(\frac{m}{2}(1-1/e_{\mathfrak a}) -
\left[\frac{m}{2}(1-1/e_{\mathfrak
    a})\right]\right)\mathfrak a.
\end{equation}
This shows that $\omega_f$ is holomorphic everywhere except maybe at cusps and elliptic points.
Moreover, if $f\in  S_m(\Gamma)$ (a space of cuspidal modular forms), then 
\begin{equation}\label{mhd-7}
\mathrm{div}{(\omega_f)}=\mathfrak c_f -  \sum_{\mathfrak a\in \mathfrak
R_\Gamma, \ elliptic} \left[\frac{m}{2}(1-1/e_{\mathfrak  a})\right]\mathfrak a - \left(\frac{m}{2} -1\right)\sum_{\mathfrak b\in \mathfrak
R_\Gamma, \ cusp}  \mathfrak b,
\end{equation}

Next, we determine all $f\in M_m(\Gamma)$ such that $\omega_f\in H^{m/2}\left(\mathfrak R_\Gamma\right)$. From (\ref{mhd-6}) we see that
such $f$ must belong to $S_m(\Gamma)$, and from (\ref{mhd-7})
\begin{equation}\label{mhd-8}
\mathfrak c_f\ge \sum_{\mathfrak a\in \mathfrak
R_\Gamma, \ elliptic} \left[\frac{m}{2}(1-1/e_{\mathfrak  a})\right]\mathfrak a+ \left(\frac{m}{2} -1\right)\sum_{\mathfrak b\in \mathfrak
  R_\Gamma, \ cusp}  \mathfrak b,
\end{equation}
 where the  integral divisor $c_f$ is defined by (see \cite[Lemma 2.2]{MuicKodrnja})
$$ \mathfrak c_f\overset{def}{=}\mathfrak c'_f-
\sum_{\substack{\mathfrak b \in \mathfrak
    R_\Gamma, \\ cusp}} \mathfrak b.
$$

Now, above considerations immediately imply the following result:
\begin{Lem}\label{mhd-8000}
We define the subspace of $S_m(\Gamma)$ by
  $$
  S^H_m(\Gamma)=\left\{f\in S_m(\Gamma); \ \  \text{$f=0$ or $f$ satisfies (\ref{mhd-8})} \right\}.
  $$
  Then, the  map  $f\longmapsto \omega_f$ is an isomorphism  of $S^H_m(\Gamma)$ onto $H^{m/2}\left(\mathfrak R_\Gamma\right)$.
\end{Lem}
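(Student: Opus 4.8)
The plan is to assemble the lemma directly from the facts already collected in this section; essentially everything needed has been laid out, and what remains is to check that the map $f \longmapsto \omega_f$ restricts correctly and is a bijection onto $H^{m/2}\!\left(\mathfrak R_\Gamma\right)$. First I would recall from \cite[Theorem 2.3.1]{Miyake} that $f \longmapsto \omega_f$ is an isomorphism of vector spaces $\mathcal A_m(\Gamma) \longrightarrow D^{m/2}\!\left(\mathfrak R_\Gamma\right)$, hence in particular linear and injective on all of $\mathcal A_m(\Gamma)$; so $S^H_m(\Gamma)$, being a subspace of $S_m(\Gamma) \subseteq M_m(\Gamma) \subseteq \mathcal A_m(\Gamma)$, maps injectively and linearly into $D^{m/2}\!\left(\mathfrak R_\Gamma\right)$. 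Then the only two things to verify are: (i) the image of $S^H_m(\Gamma)$ lands inside $H^{m/2}\!\left(\mathfrak R_\Gamma\right)$, and (ii) every $\omega \in H^{m/2}\!\left(\mathfrak R_\Gamma\right)$ arises as $\omega_f$ for some $f \in S^H_m(\Gamma)$.

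For (i): given $f \in S^H_m(\Gamma)$ with $f \neq 0$, equation (\ref{mhd-7}) expresses $\mathrm{div}(\omega_f)$ as $\mathfrak c_f$ minus the elliptic correction term minus $\left(\tfrac{m}{2}-1\right)$ times the sum over cusps. Since $f$ satisfies (\ref{mhd-8}), $\mathfrak c_f$ dominates exactly the two subtracted sums, so $\mathrm{div}(\omega_f) \ge 0$, i.e.\ $\omega_f$ is holomorphic; thus $\omega_f \in H^{m/2}\!\left(\mathfrak R_\Gamma\right)$. (The case $f = 0$ is trivial since $\omega_0 = 0 \in H^{m/2}\!\left(\mathfrak R_\Gamma\right)$.)

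For (ii): let $\omega \in H^{m/2}\!\left(\mathfrak R_\Gamma\right)$. Since $H^{m/2}\!\left(\mathfrak R_\Gamma\right) \subseteq D^{m/2}\!\left(\mathfrak R_\Gamma\right)$ and the latter is the full image of $\mathcal A_m(\Gamma)$, there is a unique $f \in \mathcal A_m(\Gamma)$ with $\omega_f = \omega$. It remains to argue $f \in S^H_m(\Gamma)$. Because $\omega_f$ is holomorphic, the discussion following (\ref{mhd-6}) — namely that $\omega_f$ is holomorphic away from cusps and elliptic points, and that $\mathrm{div}(\omega_f) \ge 0$ forces the cusp term on the right of (\ref{mhd-6}) to be absorbed — shows $f$ cannot have a pole at any cusp weighted by $\tfrac{m}{2}$, which is exactly the statement that $f \in S_m(\Gamma)$ rather than merely $M_m(\Gamma)$; more precisely, holomorphy of $\omega_f$ together with (\ref{mhd-6}) gives $\mathfrak c'_f - \tfrac{m}{2}\sum_{\text{cusp}} \mathfrak b \ge (\text{elliptic correction}) \ge 0$ at the cusps, forcing $f$ to vanish at every cusp, so $f \in S_m(\Gamma)$. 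Then rewriting via (\ref{mhd-7}) and the condition $\mathrm{div}(\omega_f) \ge 0$ yields precisely inequality (\ref{mhd-8}), so $f \in S^H_m(\Gamma)$. Combining (i) and (ii) with injectivity and linearity gives the isomorphism.

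The only mildly delicate point — and the one I would spend the most care on — is bookkeeping the elliptic-point contributions: one must confirm that the floor terms $\left[\tfrac{m}{2}(1 - 1/e_{\mathfrak a})\right]$ appearing in both (\ref{mhd-7}) and (\ref{mhd-8}) match up so that the inequality $\mathrm{div}(\omega_f) \ge 0$ is genuinely equivalent to (\ref{mhd-8}) and not merely implied by it; this is what makes $S^H_m(\Gamma)$ the \emph{exact} preimage of $H^{m/2}\!\left(\mathfrak R_\Gamma\right)$ rather than something smaller. Since (\ref{mhd-7}) is an equality of divisors, this equivalence is immediate once one reads off the coefficient at each point, so no real obstacle arises; the proof is essentially a direct unwinding of (\ref{mhd-5})–(\ref{mhd-8}).
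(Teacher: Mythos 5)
Your proposal is correct and follows the same route as the paper, which states the lemma as an immediate consequence of the preceding discussion of (\ref{mhd-5})--(\ref{mhd-8}); your write-up simply makes that unwinding explicit. One cosmetic point: in the surjectivity step the preimage $f$ is a priori only in $\mathcal A_m(\Gamma)$, so the cleanest way to see that $f$ is holomorphic on $\mathbb H$ and vanishes at the cusps is to read off the first two lines of (\ref{mhd-5}) directly, rather than to invoke (\ref{mhd-6}), which is stated only for $f\in M_m(\Gamma)$; this changes nothing of substance.
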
  
  
  We remark that when $m=2$,  (\ref{mhd-8}) and reduces to obvious $\mathfrak c_f\ge 0$. Hence,
  $S^H_2(\Gamma)= S_2(\Gamma)$ recovering  the standard
  isomorphism of $S_2(\Gamma)$ and $H^1(\mathfrak R_\Gamma)$ (see \cite[Theorem 2.3.2]{Miyake}). We continue by collecting a few properties 
  of spaces $S^H_m(\Gamma)$. 

  \begin{Lem} \label{mhd-9} Assume that $m, n\ge 2$  are even integers. Let $\Gamma$ be a Fuchsian group of the first kind. Then, we have  the following:
    \begin{itemize}
    \item[(i)] $S^H_2(\Gamma)= S_2(\Gamma)$.
    \item[(ii)]  $S^H_m(\Gamma)$ is isomorphic to  $H^{m/2}\left(\mathfrak R_\Gamma\right)$.
    \item[(iii)]  $S^H_m(\Gamma)=\{0\}$ if $g(\Gamma)=0$.
    \item[(iv)] Assume that $g(\Gamma)=1$.  Let us write $S_2(\Gamma)=\mathbb C \cdot f$, for some non--zero cuspidal form $f$. Then, we have
      $S^H_m(\Gamma)= \mathbb C \cdot f^{m/2}$. 
    \item[(v)] $\dim{S^H_m(\Gamma)}= (m-1)\left(g(\Gamma)-1\right)$ if $g(\Gamma)\ge 2$.
    \item[(vi)] $S^H_m(\Gamma)\cdot S^H_n(\Gamma)\subset S^H_{m+n}(\Gamma)$.
       \item[(vii)] There are no  $m/2$--Weierstrass points on $\mathfrak R_\Gamma$ for $g(\Gamma)\in \{0, 1\}$. 
    \item[(viii)] Assume that $g(\Gamma)\ge 2$, and $\mathfrak a_\infty$ is a $\Gamma$-cusp.  Then,  $\mathfrak a_\infty$ is a 
      $\frac{m}{2}$--Weierstrass point if and only if there exists $f\in  S^H_m(\Gamma)$, $f\neq 0$, such that
      $$
      \mathfrak c'_f(\mathfrak a_\infty)\ge \begin{cases}   \frac{m}{2} + g(\Gamma) \ \ \text{if} \ \ m=2;\\
        \frac{m}{2} +  (m-1)(g(\Gamma)-1)  \ \ \text{if} \ \ m\ge 4.\\
        \end{cases}
      $$
    \item[(ix)] Assume that $g(\Gamma)\ge 1$, and $\mathfrak a_\infty$ is a $\Gamma$-cusp.  Then, there exists a basis $f_1, \ldots f_t$ of $S_m^H(\Gamma)$ such that
      their $q$--expansions are of the form
      $$
      f_u=a_uq^{i_u}+ \text{higher order terms in $q$}, \ \ 1\le u\le t,
      $$
      where 
      $$
      \frac{m}{2}\le i_1< i_2< \cdots < i_t \le \frac{m}{2}+ m\left(g(\Gamma)-1\right),
      $$
      and
      $$
      a_u\in \mathbb C, \ \  a_u\neq 0.
      $$
      \item[(x)] Assume that $g(\Gamma)\ge 1$, and $\mathfrak a_\infty$ is a $\Gamma$-cusp. Then,  $\mathfrak a_\infty$ is {\bf not} a 
      $\frac{m}{2}$--Weierstrass point if and only if there exists a basis $f_1, \ldots f_t$ of $S_m^H(\Gamma)$ such that
      their $q$--expansions are of the form
      $$
      f_u=a_uq^{u+m/2-1}+ \text{higher order terms in $q$}, \ \ 1\le u\le t,
      $$
      where 
      $$
      a_u\in \mathbb C, \ \  a_u\neq 0.
      $$
      \item[(xi)] Assume that $g(\Gamma)\ge 1$. 
    Let us fix a basis $f_1, \ldots, f_t$ of $S^H_m(\Gamma)$, and let $\omega_1, \ldots, \omega_t$ be the corresponding basis of $H^{m/2}\left(\mathfrak R_\Gamma\right)$. As in Section \ref{mhd},
    we construct
    holomorphic differential $W\left(\omega_1, \ldots, \omega_t\right)\in H^{\frac{t}{2}\left(m-1+t\right)}\left(\mathfrak R_\Gamma\right)$. We also construct the Wronskian $W(f_1,\ldots, f_t)\in
      S_{t(m+t-1)}(\Gamma)$  (see Lemma \ref{wron-2} below). Then, we have the following equality $\omega_{W(f_1,\ldots, f_t)}=W\left(\omega_1, \ldots, \omega_t\right)$.
    In particular, we obtain the following:
    $W(f_1,\ldots, f_t)\in S^H_{t(m+t-1)}(\Gamma)$. 
    Moreover, assume that $\mathfrak a_\infty$ is a $\Gamma$-cusp. Then, $\mathfrak a_\infty$ is a 
    $\frac{m}{2}$--Weierstrass point if and only if
    $$
    \mathfrak c_{ W(f_1,\ldots, f_t)}(\mathfrak a_\infty)\ge \frac{t}{2}\left(m-1+t\right).
    $$
      \end{itemize}  
    \end{Lem}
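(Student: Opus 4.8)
The plan is to prove the multi-part Lemma \ref{mhd-9} by reducing each assertion to the dictionary between modular forms and holomorphic differentials already established in Section \ref{mhd-cont}, together with the Riemann--Roch computations recalled there. Parts (i), (ii), (iii) are immediate: (ii) is just Lemma \ref{mhd-8000}, (i) is the remark following Lemma \ref{mhd-8000} that (\ref{mhd-8}) collapses to $\mathfrak c_f\ge 0$ when $m=2$, and (iii) follows from (\ref{mhd-2}) since $H^{m/2}\left(\mathfrak R_\Gamma\right)=0$ when $g(\Gamma)=0$, so by (ii) also $S^H_m(\Gamma)=0$. For (iv), in genus $1$ there are no elliptic points issues beyond what (\ref{mhd-2}) records and $\dim H^{m/2}\left(\mathfrak R_\Gamma\right)=1$; the form $f^{m/2}$ lies in $S_m(\Gamma)$ and one checks via (\ref{mhd-7}) that $\omega_{f^{m/2}}=\omega_f^{\,m/2}$ is holomorphic (here $\omega_f$ is a nowhere-vanishing differential of degree $1$), so $f^{m/2}\in S^H_m(\Gamma)$; since the space is one-dimensional, equality follows. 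Part (v) is $\dim H^{m/2}\left(\mathfrak R_\Gamma\right)=(2\cdot \tfrac{m}{2}-1)(g-1)=(m-1)(g-1)$ from (\ref{mhd-2}) combined with (ii). Part (vi) follows from the fact that the isomorphism $f\mapsto\omega_f$ is multiplicative up to the correction divisor in (\ref{mhd-5}): if $\omega_f$ and $\omega_g$ are holomorphic of degrees $m/2$ and $n/2$, then $\omega_{fg}=\omega_f\omega_g$ is holomorphic of degree $(m+n)/2$, hence $fg\in S^H_{m+n}(\Gamma)$. Part (vii) is then immediate from (ii) and Definition \ref{mhd-def}: when $g(\Gamma)=0$ there are no holomorphic differentials at all, and when $g(\Gamma)=1$, $\dim H^{m/2}\left(\mathfrak R_\Gamma\right)=1$ and a nonzero $\omega\in H^{m/2}\left(\mathfrak R_\Gamma\right)$ has divisor of degree $0$ by (\ref{mhd-1}) with $g=1$, so no point can have $\nu_{\mathfrak a}(\omega)\ge 1$.

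The substantive parts are (viii), (ix), (x), (xi). For (viii), I would start from Definition \ref{mhd-def}: $\mathfrak a_\infty$ is a $\frac{m}{2}$-Weierstrass point iff there is $0\ne\omega\in H^{m/2}\left(\mathfrak R_\Gamma\right)$ with $\nu_{\mathfrak a_\infty}(\omega)\ge \dim H^{m/2}\left(\mathfrak R_\Gamma\right)=(m-1)(g-1)$ (or $g$ when $m=2$). Writing $\omega=\omega_f$ for $f\in S^H_m(\Gamma)$ and using the cuspidal case of (\ref{mhd-5}), namely $\nu_{\mathfrak a_\infty}(f)=\nu_{\mathfrak a_\infty}(\omega_f)+\tfrac{m}{2}$, and the relation $\nu_{\mathfrak a_\infty}(f)=\mathfrak c'_f(\mathfrak a_\infty)$ at a cusp (since cusps carry no elliptic contribution in (\ref{mhd-60000000000000000})), the inequality $\nu_{\mathfrak a_\infty}(\omega_f)\ge\dim H^{m/2}$ translates exactly into $\mathfrak c'_f(\mathfrak a_\infty)\ge \tfrac{m}{2}+\dim H^{m/2}\left(\mathfrak R_\Gamma\right)$, which is the stated bound in the two cases $m=2$ and $m\ge4$ after substituting the dimension formula from (\ref{mhd-2}). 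For (ix), the key is that $\{i_u\}$ are the orders of vanishing at $\mathfrak a_\infty$ (equivalently in $q$) of forms in $S^H_m(\Gamma)$; choosing a basis in echelon form with respect to the $q$-expansion gives strictly increasing leading exponents $i_1<\cdots<i_t$. The lower bound $i_1\ge m/2$ comes from (\ref{mhd-7})/(\ref{mhd-8}): every $f\in S^H_m(\Gamma)$ satisfies $\mathfrak c_f(\mathfrak a_\infty)\ge \tfrac m2-1$, i.e. $\mathfrak c'_f(\mathfrak a_\infty)\ge \tfrac m2$, i.e. $\nu_{\mathfrak a_\infty}(f)\ge \tfrac m2$. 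The upper bound $i_t\le \tfrac m2+m(g-1)$ follows because $\nu_{\mathfrak a_\infty}(\omega_f)$ cannot exceed $\deg(\mathrm{div}(\omega_f))=2\cdot\tfrac m2(g-1)=m(g-1)$ (from (\ref{mhd-1})), as $\omega_f$ is holomorphic so its divisor is effective; add $\tfrac m2$ via (\ref{mhd-5}).

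For (x), the criterion is that $\mathfrak a_\infty$ is \emph{not} a $\frac m2$-Weierstrass point iff the "gap sequence" at $\mathfrak a_\infty$ is the generic one, which is exactly the condition $i_u=u+m/2-1$ for the echelon basis of part (ix): indeed, by Definition \ref{mhd-def} and its reformulation via the Wronskian, $\mathfrak a_\infty$ fails to be a Weierstrass point iff there is no nonzero $\omega_f\in H^{m/2}\left(\mathfrak R_\Gamma\right)$ with $\nu_{\mathfrak a_\infty}(\omega_f)\ge t$; translating to $f$ this says no $f\in S^H_m(\Gamma)$ has $\nu_{\mathfrak a_\infty}(f)\ge t+m/2$, equivalently $i_t\le t+m/2-1$, and since we always have $m/2\le i_1<i_2<\cdots<i_t$ strictly increasing among $t$ integers, $i_t\le t+m/2-1$ forces $i_u=u+m/2-1$ for all $u$. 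Conversely such a basis obviously gives $i_t=t+m/2-1<t+m/2$. Finally, for (xi), I would invoke Lemma \ref{wron-2} (referenced below, constructing $W(f_1,\dots,f_t)\in S_{t(m+t-1)}(\Gamma)$) and check $\omega_{W(f_1,\dots,f_t)}=W(\omega_1,\dots,\omega_t)$ by a local computation: both sides are built from the same $\varphi_i$ via (\ref{mhd-3})--(\ref{mhd-4}), and the change-of-variable factors from passing between $f_i$ and $\omega_i=\varphi_i(dz)^{m/2}$ collect into precisely the power of $dz$ appearing in the definition of $W$; here the normalization conventions in (\ref{mhd-5}) and the identity $\tfrac t2(2\cdot\tfrac m2-1+t)=\tfrac t2(m-1+t)$ must be matched carefully. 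Once the identity of differentials holds, $W(\omega_1,\dots,\omega_t)\in H^{\frac t2(m-1+t)}\left(\mathfrak R_\Gamma\right)$ (from Section \ref{mhd}) immediately gives $W(f_1,\dots,f_t)\in S^H_{t(m+t-1)}(\Gamma)$ via Lemma \ref{mhd-8000}, and the Weierstrass criterion "$\nu_{\mathfrak a_\infty}(W(\omega_1,\dots,\omega_t))\ge 1$" from Definition \ref{mhd-def}'s reformulation translates, using $\nu_{\mathfrak a_\infty}(W(f_1,\dots,f_t))=\nu_{\mathfrak a_\infty}(\omega_{W})+\tfrac t2(m-1+t)$ and $\mathfrak c_{W}(\mathfrak a_\infty)=\nu_{\mathfrak a_\infty}(W(f_1,\dots,f_t))-(\tfrac{t(m+t-1)}{2}-1)$, into the displayed inequality $\mathfrak c_{W(f_1,\dots,f_t)}(\mathfrak a_\infty)\ge \tfrac t2(m-1+t)$.

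I expect the main obstacle to be part (xi): pinning down the exact bookkeeping of $dz$-powers and the $\tfrac m2$-shift constants so that $\omega_{W(f_1,\dots,f_t)}=W(\omega_1,\dots,\omega_t)$ holds on the nose (not merely up to a nonzero scalar), and then threading the divisor relation (\ref{mhd-5}) at the cusp through the Wronskian's weight $t(m+t-1)$ to land precisely on the stated bound rather than one off. The other parts are essentially formal consequences of the dictionary of Section \ref{mhd-cont} together with the degree formula (\ref{mhd-1}) and the dimension formula (\ref{mhd-2}); the main care there is the elementary combinatorial lemma that $t$ strictly increasing integers starting at $\ge c$ with top $\le c+t-1$ must be exactly $c,c+1,\dots,c+t-1$, used in (x).
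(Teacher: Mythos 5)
Your proposal follows essentially the same route as the paper: parts (i)--(viii) are the same formal translations through Lemma \ref{mhd-8000}, (\ref{mhd-2}) and (\ref{mhd-5})--(\ref{mhd-8}); (x) is deduced from (viii) and (ix) by the same gap-sequence count; and (xi) is verified, as in the paper, by checking the identity $\omega_{W(f_1,\ldots,f_t)}=W(\omega_1,\ldots,\omega_t)$ locally near a non-elliptic point. The one genuine variation is your upper bound in (ix): you bound $\nu_{\mathfrak a_\infty}(\omega_f)$ directly by $\deg\left(\mathrm{div}(\omega_f)\right)=m\left(g(\Gamma)-1\right)$ using effectivity of the divisor of a holomorphic differential, whereas the paper computes $\deg(\mathfrak c'_f)=\dim M_m(\Gamma)+g(\Gamma)-1$ and subtracts the forced contributions at elliptic points and at the cusps other than $\mathfrak a_\infty$; your version is shorter and covers $g(\Gamma)=1$ uniformly, which the paper treats as a separate (easy) case. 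One slip in (xi): the correct relation is $\mathfrak c_{W}(\mathfrak a_\infty)=\nu_{\mathfrak a_\infty}\left(W(f_1,\ldots,f_t)\right)-1=\nu_{\mathfrak a_\infty}(\omega_{W})+\tfrac t2(m+t-1)-1$, not $\nu_{\mathfrak a_\infty}(W)-\left(\tfrac{t(m+t-1)}{2}-1\right)$ as you wrote; with your identity the Weierstrass criterion $\nu_{\mathfrak a_\infty}(\omega_W)\ge 1$ would translate into $\mathfrak c_W(\mathfrak a_\infty)\ge 2$ rather than the stated bound. The displayed inequality you arrive at is nevertheless the right one, so this is a bookkeeping error in an intermediate line, not a flaw in the method.
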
 
  \begin{proof} (i) and (ii) follow from above discussion. Next, using above discussion and (\ref{mhd-2}) we obtain
    $$
    \dim{S^H_m(\Gamma)} =\dim H^{m/2}\left(\mathfrak R_\Gamma\right)=\begin{cases}  0 \ \ &\text{if} \ \ m\ge 2, g(\Gamma)=0;\\
     g(\Gamma)    \ \ &\text{if} \ \ m=2, \ g(\Gamma)\ge 1;\\
   g(\Gamma)    \ \ & \text{if} \ \ m\ge 4, \ g(\Gamma)= 1;\\
     (m-1)\left(g(\Gamma)-1\right)    \ \ &\text{if} \ \ m\ge 4, \ g(\Gamma)\ge 2.\\           
    \end{cases}
    $$
    This immediately implies (iii) and (v). Next, assume that $g(\Gamma)=1$. Then, we see that $\dim{S^H_m(\Gamma)}\le 1$ for all even integers $m\ge 4$.
    It is well known that $f^{m/2} \in S_m(\Gamma)$. Next, (\ref{mhd-8}) for $m=2$,  and \cite[Lemma 2.2]{MuicKodrnja} imply  $\mathrm{div}{(\omega_f)}=\mathfrak c_f=0$. 
    Using \cite[Theorem 2.3.2]{Miyake}, we obtain
    $\omega_{f^{m/2}}=\omega^{m/2}_f$. Hence,
    $$
    \mathrm{div}{(\omega_{f^{m/2}})}=\frac{m}{2} \mathrm{div}{(\omega_f)}=0.
    $$
    Then, applying (\ref{mhd-7}) with $f^{m/2}$ in place of $f$, we obtain
    $$
    \mathfrak c_{f^{m/2}} = \sum_{\mathfrak a\in \mathfrak
R_\Gamma, \ elliptic} \left[\frac{m}{2}(1-1/e_{\mathfrak  a})\right]\mathfrak a + \left(\frac{m}{2} -1\right)\sum_{\mathfrak b\in \mathfrak
      R_\Gamma, \ cusp}  \mathfrak b.
    $$
    This shows that $f^{m/2}\in S^H_m(\Gamma)$ proving (iv). (vi) follows from \cite[Theorem 2.3.1]{Miyake}.
(vii) follows immediately form the discussion in Section \ref{mhd}, and it is well--known. (viii) is a reinterpretation of
    Definition \ref{mhd-def}. The details are left to the reader as an easy exercise. Now, we prove (ix) and (x). The case of $g(\Gamma)=1$ are obvious since 
    we have $S_2(\Gamma)= \mathbb C \cdot f$ where
$$
\mathfrak c'_f=\mathfrak a_\infty + \sum_{\substack{\mathfrak b \in \mathfrak
    R_\Gamma, \ cusp\\
    \mathfrak b \neq \mathfrak a_\infty}} \mathfrak b.
$$
Next, we prove  (ix) and (x) in the case $g(\Gamma)\ge 2$. Let $f\in S_m^H(\Gamma)$, $f\neq 0$. Then, by the definition of $S_m^H(\Gamma)$, we obtain
\begin{equation} \label{mhd-11}
  \mathfrak c'_f(\mathfrak a_\infty)=1+ \mathfrak c_f(\mathfrak a_\infty)\ge 1+ \left(\frac{m}{2}-1\right)=\frac{m}{2}.
  \end{equation}
On the other hand, again by the definition of $S_m^H(\Gamma)$ (see (\ref{mhd-8})) and the fact that $\mathfrak c'_f\ge 0$, we obtain

\begin{align*}
  \deg{(\mathfrak c'_f)} & =\sum_{\mathfrak a \in \mathfrak R_\Gamma} \ \mathfrak c'_f(\mathfrak a) \ge 
                        \sum_{\mathfrak a\in \mathfrak R_\Gamma, \ elliptic}  \mathfrak c'_f(\mathfrak a) +  \sum_{\substack{\mathfrak b \in \mathfrak
      R_\Gamma, \ cusp\\    \mathfrak b \neq \mathfrak a_\infty}} \mathfrak c'_f(\mathfrak b) + \mathfrak c'_f(\mathfrak a_\infty)\ge\\
   &\sum_{\mathfrak a\in \mathfrak R_\Gamma, \ elliptic} \left[\frac{m}{2}(1-1/e_{\mathfrak  a})\right]+ \frac{m}{2} \left(t-1\right) + \mathfrak c'_f(\mathfrak a_\infty)
  \end{align*}
where $t$ is the number of nonequivalent $\Gamma$--cusps. The degree $\deg{(\mathfrak c'_f)}$ is given by  

\begin{align*}
  \deg{(\mathfrak c'_f)} &=   \dim M_m(\Gamma)+ g(\Gamma)-1\\
  &=   \begin{cases} 2(g(\Gamma)-1)+t  \ \ \text{if} \ \ m=2;\\
                 m(g(\Gamma)-1)+ \frac{m}{2}t+ \sum_{\substack{\mathfrak a\in \mathfrak
        R_\Gamma, \\ elliptic}} \left[\frac{m}{2}(1-1/e_{\mathfrak a})\right] \ \ \text{if} \ \ m\ge 4.\\
    \end{cases}
\end{align*}
Combining with the previous inequality, we obtain
$$
\mathfrak c'_f(\mathfrak a_\infty) \le \frac{m}{2}+ m(g(\Gamma)-1) \ \ \text{if} \ \ m\ge 2.
$$
Having in mind (\ref{mhd-11}), the rest of (ix) has a standard argument (see for example \cite[Lemma 4.3]{Muic}).  Finally, (x) follows (viii) and (ix).
The last claim (xi) follows easily if we note that $ \omega_{W(f_1,\ldots, f_t)}=W\left(\omega_1, \ldots, \omega_t\right)$ is equality of two meromorphic differentials. So, it is enough to pick a non--elliptic
point $w\in \mathbb H$ and check the equality in a small neighborhood of $w$ in $\mathbb H$. But this local identity is obvious, and the claim (xi) follows. 
\end{proof}

  \vskip .2in
  We end this section by recalling  a generalization of the usual notion of the Wronskian of modular forms \cite{roh}, (\cite{ono}, 6.3.1),  (\cite{Muic}, the proof of Theorem 4-5),
and (\cite{Muic2}, Lemma 4-1). 

\begin{Lem}\label{wron-2} Let $m\ge 1$. Then, for any sequence 
$f_1, \ldots, f_k\in M_m(\Gamma)$,
the Wronskian
$$
W\left(f_1, \ldots, f_k\right)(z)\overset{def}{=}\left|\begin{matrix}
f_1(z) &
\cdots & f_{k}(z) \\
\frac{df_1(z)}{dz} &
\cdots & \frac{df_{k}(z)}{dz} \\
&\cdots & \\
\frac{d^{k-1}f_1(z)}{dz^{k-1}} &
\cdots & \frac{d^{k-1}f_{k}(z)}{dz^{k-1}} \\
\end{matrix}\right|
$$
is a cuspidal modular form in $S_{k(m+k-1)}(\Gamma)$ if $k\ge 2$. If $f_1, \ldots, f_k$ are linearly independent,
then  $W\left(f_1, \ldots, f_k\right)\neq 0$.   
\end{Lem}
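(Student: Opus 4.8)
The plan is to prove Lemma~\ref{wron-2} by reducing to the classical behavior of the Wronskian under the weight-$m$ slash action. First I would recall the standard transformation formula: if $f$ has weight $m$ for $\Gamma$, then for $\gamma=\left(\begin{smallmatrix}a&b\\c&d\end{smallmatrix}\right)\in\Gamma$ one has $f(\gamma.z)=(cz+d)^m f(z)$, and differentiating repeatedly in $z$ (using $\frac{d}{dz}(\gamma.z)=(cz+d)^{-2}$) shows that the column vector $\left(f,\frac{df}{dz},\ldots,\frac{d^{k-1}f}{dz^{k-1}}\right)^{t}$ evaluated at $\gamma.z$ equals $(cz+d)^{m}$ times an \emph{upper-triangular} unipotent-type matrix $U_\gamma(z)$ (depending only on $\gamma$ and $z$, with diagonal entries $(cz+d)^{0},(cz+d)^{2},\ldots,(cz+d)^{2(k-1)}$ up to binomial factors) applied to the same column vector evaluated at $z$. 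Taking determinants of the $k\times k$ matrix whose columns are these vectors for $f_1,\ldots,f_k$, the factor $(cz+d)^{m}$ comes out $k$ times and $\det U_\gamma(z)=(cz+d)^{0+2+\cdots+2(k-1)}=(cz+d)^{k(k-1)}$, giving
$$
W(f_1,\ldots,f_k)(\gamma.z)=(cz+d)^{k(m+k-1)}\,W(f_1,\ldots,f_k)(z),
$$
which is exactly the weight-$k(m+k-1)$ automorphy condition. I would either do the triangular-matrix computation honestly by induction on $k$, or cite one of the references (\cite{roh}, \cite{ono} 6.3.1, \cite{Muic} proof of Theorem 4-5, \cite{Muic2} Lemma 4-1) for this identity.

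Next I would check holomorphy on $\mathbb H$ and at the cusps, and the cuspidal vanishing. Holomorphy on $\mathbb H$ is immediate since each $f_i$ is holomorphic there. At a cusp, after conjugating to $\infty$ and using the $q$-expansion $f_i=\sum_{n\ge 0} a_n^{(i)} q^{n}$ with $q=e^{2\pi i z/h}$ (and $\frac{d}{dz}=\frac{2\pi i}{h} q\frac{d}{dq}$), one sees $W(f_1,\ldots,f_k)$ is, up to a nonzero constant, the Wronskian in the variable $q\frac{d}{dq}$ of $k$ power series in $q$ each divisible by $q^{0}$; a short argument (row reduction, or the product rule for $q\frac{d}{dq}$) shows this Wronskian is divisible by $q^{1+2+\cdots+(k-1)}=q^{k(k-1)/2}$, in particular by $q$ when $k\ge 2$, so it vanishes at the cusp. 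Hence $W(f_1,\ldots,f_k)\in S_{k(m+k-1)}(\Gamma)$ for $k\ge 2$. (For $k=1$ the Wronskian is just $f_1$ and no vanishing is claimed, which is why the hypothesis $k\ge 2$ appears.)

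Finally, for the non-vanishing: if $f_1,\ldots,f_k$ are linearly independent over $\mathbb C$, I would argue that $W(f_1,\ldots,f_k)\equiv 0$ would force a $\mathbb C$-linear dependence. The cleanest route is the classical fact that holomorphic functions on a connected domain with identically vanishing Wronskian are linearly dependent over $\mathbb C$ — this is standard, but one should note the $f_i$ are holomorphic on the connected set $\mathbb H$, so it applies directly. Alternatively, pass through the isomorphism with meromorphic/holomorphic differentials and invoke \cite[Proposition III.5.10]{FK} as in Section~\ref{mhd}. The main obstacle — really the only nontrivial point — is pinning down the automorphy exponent $k(m+k-1)$ correctly, i.e.\ verifying that the change-of-variable matrix $U_\gamma(z)$ is triangular with the stated diagonal; this is a finite induction on $k$ using $\frac{d}{dz}\big(g(\gamma.z)\big)=(cz+d)^{-2}g'(\gamma.z)$ and the product rule, and everything else (holomorphy, cuspidal vanishing, non-vanishing) is routine once that identity is in hand.
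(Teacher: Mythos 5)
The paper offers no proof of this lemma at all: it is explicitly \emph{recalled} as a known result with citations to \cite{roh}, (\cite{ono}, 6.3.1), (\cite{Muic}, proof of Theorem 4-5), and (\cite{Muic2}, Lemma 4-1), so there is nothing internal to compare against. Your argument is the standard one contained in those sources and is correct: the triangular change-of-variables computation gives the automorphy factor $(cz+d)^{km}\cdot(cz+d)^{0+2+\cdots+2(k-1)}=(cz+d)^{k(m+k-1)}$; at a cusp the Wronskian in $q\frac{d}{dq}$ vanishes to order at least $1+2+\cdots+(k-1)=k(k-1)/2\ge 1$ for $k\ge 2$ (most cleanly seen via the Vandermonde determinant $\det\left[n_i^{\,j}\right]$, which kills all terms with repeated exponents); and the classical criterion that linearly independent holomorphic functions on the connected domain $\mathbb H$ have nonvanishing Wronskian gives the final claim.
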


\section{An Algorithm For $X_0(N)$ } \label{algo} 
In this section we describe the algorithm for testing $\mathfrak a_\infty$ to be a $\frac{m}{2}$--Weierstrass point on $X_0(N)$ for all even integers
$m=2, 4, 6, \ldots$ assuming that $X_0(N)$ is not hyperelliptic (see Introduction).

We begin with the following remark. The criterion in Lemma \ref{mhd-9} (x) is a quite good criterion to check whether or not $\mathfrak a_\infty$ is a Weierstrass points (the case $m=2$) using
computer systems such as SAGE since we need just to list the basis. This case is well-known (see \cite[Definition 6.1]{ono}). This criterion has been used in practical computations in
combination with SAGE in \cite{MuicKodrnja} for $\Gamma=\Gamma_0(N)$.

But,  Lemma \ref{mhd-9} (x)  is not good when $m\ge 4$ in most interesting cases.  For example, when  $\Gamma$ has elliptic
points and for  $m$ large enough,  there is a  basis  of the space of cuspidal forms $S_m(\Gamma)$ which contains properly normalized cusp forms having leading terms
$q^{m/2}, \ldots,  q^{m/2+ m(g(\Gamma)-1)}$. This follows from the following two lemmas. 

First, we recall  \cite[Lemma 2.9]{Muic2} which is well-known in a slightly different notation (\cite{pete-1}, \cite{pete-2}):
\begin{Lem}\label{mhd-12} Let $m\ge 4$ be an even integer such that $\dim S_m(\Gamma)\ge g(\Gamma)+1$.
Then, for all 
$1\le i \le t_m-g$, there exists $f_i\in S_m(\Gamma)$ such that 
$\mathfrak c'_{f_i}(\mathfrak a_\infty)= i$.
\end{Lem}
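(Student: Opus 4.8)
The plan is to realize each desired cusp form $f_i$ (with $\mathfrak c'_{f_i}(\mathfrak a_\infty)=i$) as a $\bbC$-linear combination chosen to kill the first several coefficients of a $q$-expansion, and to control the resulting order of vanishing by a dimension count. Concretely, I would first pass from the divisor quantity $\mathfrak c'_{f}(\mathfrak a_\infty)$ to the order of vanishing of the $q$-expansion at $\infty$. By (\ref{mhd-60000000000000000}) the elliptic correction terms are supported only at elliptic points, so for the cusp $\mathfrak a_\infty$ one has $\ord_{q}(f)=\mathfrak c'_f(\mathfrak a_\infty)$ (the usual normalization in which $f$ has a $q$-expansion $\sum_{n\ge 0} a_n q^n$ at $\infty$ and $\ord_q(f)$ is the smallest $n$ with $a_n\neq 0$; here $S_m(\Gamma)$ forces $\ord_q(f)\ge 1$). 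So the claim is equivalent to: for each $1\le i\le t_m-g$ there is $f_i\in S_m(\Gamma)$ with $\ord_q(f_i)=i$ exactly.

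Next I would run the standard "staircase basis" argument. Consider the filtration $S_m(\Gamma)=F_1\supseteq F_2\supseteq\cdots$ where $F_j=\{f\in S_m(\Gamma):\ \ord_q(f)\ge j\}$; these are the kernels of the linear maps $f\mapsto(a_1,\dots,a_{j-1})$, so $\dim F_j-\dim F_{j+1}\in\{0,1\}$ for every $j$. The set $S=\{\,\ord_q(f): 0\ne f\in S_m(\Gamma)\,\}$ is exactly the set of "jumps'', and $\card(S)=\dim S_m(\Gamma)=:t_m$. Thus to prove the lemma it suffices to show that $S$ contains the full interval $\{1,2,\dots,t_m-g\}$, i.e. that there are no gaps in $S$ below $t_m-g$: the smallest $t_m-g$ jumps occur at consecutive integers $1,\dots,t_m-g$. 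Granting that, a jump at $i$ means $F_i\supsetneq F_{i+1}$, hence there is $f_i\in F_i\setminus F_{i+1}$, which is precisely an $f_i$ with $\ord_q(f_i)=i$.

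The remaining point — and the real content — is the no-gaps statement. I would prove it using the divisor description (\ref{mhd-7}) together with Riemann--Roch on $\mathfrak R_\Gamma$, exactly as in \cite[Lemma 2.9]{Muic2}, \cite{pete-1}, \cite{pete-2}. The mechanism: if there were a gap, say $j\in S$ but $j+1\notin S$ with $j+1\le t_m-g$, then multiplying any $f\in S_m(\Gamma)$ of order exactly $j$ by a suitable function with a simple pole at $\mathfrak a_\infty$ and otherwise controlled poles — available because $g\ge 0$ and a gap forces $\dim L(j\mathfrak a_\infty-K')$ to jump, where $K'$ is the divisor occurring in (\ref{mhd-7}) — produces a form of order $j+1$, contradicting $j+1\notin S$. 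Equivalently, one computes $\dim F_j$ directly via Riemann--Roch: $F_j\cong L\big(\mathrm{div}(\omega_f)+\cdots - j\mathfrak a_\infty\big)$-type spaces, and the jump sequence is forced to be an initial interval as long as the degree of the relevant divisor stays large enough, which is exactly the condition $j\le t_m-g$ (the extra $g$ being the contribution of $\dim L(K-\cdot)$ via duality, which can only start to be nonzero once we have removed $\deg-g$ worth of points). The hypothesis $\dim S_m(\Gamma)=t_m\ge g+1$ guarantees this range $\{1,\dots,t_m-g\}$ is nonempty.

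The main obstacle I anticipate is making the Riemann--Roch bookkeeping precise: one must carry the elliptic correction divisor from (\ref{mhd-7}) through the argument and verify that subtracting multiples of $\mathfrak a_\infty$ from the relevant canonical-type divisor does not prematurely make $\dim L(K-\cdot)$ nonzero before index $t_m-g$. Since this is routine and already done in \cite[Lemma 2.9]{Muic2}, I would state it as a direct appeal to that reference, indicating only the translation from the $\mathfrak c'_f$-language to the $q$-order language given above.
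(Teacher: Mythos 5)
Your proposal is correct and matches the paper, which gives no proof of Lemma \ref{mhd-12} at all: it simply recalls the statement from \cite[Lemma 2.9]{Muic2} (and \cite{pete-1}, \cite{pete-2}), and the staircase-plus-Riemann--Roch argument you sketch, with the final deferral of the bookkeeping to that reference, is exactly the standard argument behind that citation.
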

The second lemma is even more elementary, and it follows from the explicit formula for the dimension of $S_m(\Gamma)$. The details are left to the reader as an exercise.
\begin{Lem}\label{mhd-13} Assume that $\Gamma$ has elliptic points. (For example, $\Gamma=\Gamma_0(N)$.) Then, for a sufficiently large even integer  $m$, we have
  $\frac{m}{2} +  m(g(\Gamma)-1)  \le \dim S_m(\Gamma) -g(\Gamma)$.
\end{Lem}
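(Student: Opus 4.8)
The plan is to substitute the classical dimension formula for $S_m(\Gamma)$ and to reduce the asserted bound to an elementary inequality whose right--hand side is a constant depending only on $\Gamma$. Recall that for an even integer $m\ge 4$ one has
$$
\dim S_m(\Gamma)=(m-1)(g(\Gamma)-1)+\left(\frac m2-1\right)t+\sum_{\mathfrak a\in\mathfrak R_\Gamma,\ elliptic}\left[\frac m2\left(1-\frac{1}{e_{\mathfrak a}}\right)\right],
$$
where $t$ is the number of inequivalent $\Gamma$--cusps. This is the standard formula (see \cite[Theorem 2.5.2]{Miyake}); alternatively it follows from the value of $\deg(\mathfrak c'_f)=\dim M_m(\Gamma)+g(\Gamma)-1$ computed in the proof of Lemma \ref{mhd-9}, together with the fact that the Eisenstein subspace of $M_m(\Gamma)$ has dimension $t$ for $m\ge 4$.

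Substituting this expression into $\frac m2+m(g(\Gamma)-1)\le\dim S_m(\Gamma)-g(\Gamma)$ and cancelling the obvious terms (the essential cancellation being $(m-1)(g(\Gamma)-1)-m(g(\Gamma)-1)=-(g(\Gamma)-1)$), the claim reduces to
$$
\frac m2\,(t-1)+\sum_{\mathfrak a\in\mathfrak R_\Gamma,\ elliptic}\left[\frac m2\left(1-\frac{1}{e_{\mathfrak a}}\right)\right]\ \ge\ 2g(\Gamma)+t-1 .
$$
Since the right--hand side depends only on $\Gamma$, it suffices to bound the left--hand side below by a quantity tending to infinity with $m$. Throughout, the lemma is applied with $\mathfrak a_\infty$ a $\Gamma$--cusp, so $t\ge 1$ (automatic for $\Gamma=\Gamma_0(N)$). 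If $t\ge 2$, then $\frac m2(t-1)\ge\frac m2$ while the sum is nonnegative, so the inequality holds as soon as $m\ge 4g(\Gamma)+2t-2$. If $t=1$, the first term vanishes, and here the hypothesis that $\Gamma$ has an elliptic point is used: picking one of order $e\ge 2$, the corresponding summand is at least $\left[\frac m4\right]$ because $1-\frac1e\ge\frac12$, and $\left[\frac m4\right]\ge 2g(\Gamma)$ once $m\ge 8g(\Gamma)$. In all cases the inequality holds for every sufficiently large even $m$, which is the assertion.

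The only points requiring care are the bookkeeping in the reduction step and the standing hypothesis that $\Gamma$ possesses a cusp: for a cocompact $\Gamma$ the inequality may genuinely fail (for instance when $g(\Gamma)=1$ and $\Gamma$ has a single elliptic point of order two), so the hypothesis ``$\Gamma$ has elliptic points'' serves precisely to settle the borderline case $t=1$, while $t\ge 1$ holds in every setting in which the lemma is used, in particular for $\Gamma_0(N)$. I do not expect any further obstacle: the proof is the dimension formula followed by two floor estimates.
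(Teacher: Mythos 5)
Your proof is correct and follows exactly the route the paper prescribes: the paper leaves this lemma as an exercise, stating only that it ``follows from the explicit formula for the dimension of $S_m(\Gamma)$'', and your substitution of that formula, the reduction to $\frac{m}{2}(t-1)+\sum_{\mathfrak a\ elliptic}\left[\frac{m}{2}(1-1/e_{\mathfrak a})\right]\ge 2g(\Gamma)+t-1$, and the case analysis on the number of cusps $t$ carry out precisely that computation. Your remark that the elliptic-point hypothesis is used exactly to settle the case $t=1$, and that the presence of at least one cusp (automatic for $\Gamma_0(N)$ and in every use of the lemma in the paper) is genuinely needed since the inequality can fail for cocompact groups, is a worthwhile clarification of a detail the paper glosses over.
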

Thus,  under above assumptions,  in view of  Lemma \ref{mhd-9} (ix), the listing of basis of $S_m(\Gamma_0(N))$ in SAGE does not give any information about basis of 
$S^H_m(H)$.

\vskip .2in
Now, explain the algorithm for testing that  $\mathfrak a_\infty$ is  a 
$\frac{m}{2}$--Weierstrass point for $m\ge 4$. In what follows we assume that $g(\Gamma)\ge 2$ (see Lemma \ref{mhd-9} (vii)). 
We start with the following lemma. 
 
\begin{Lem}\label{cts-5000}  Let $m\ge 4$ be an even integer. Let us select a basis $f_0, \ldots, f_{g-1}$, $g=g(\Gamma)$, of $S_2(\Gamma)$. Then,
  all of $\binom{g+\frac{m}{2}-1}{\frac{m}{2}}$ monomials  $f_0^{\alpha_0}f_1^{\alpha_1}\cdots f_{g-1}^{\alpha_{g-1}}$, $\alpha_i\in \mathbb Z_{\ge 0}$, $\sum_{i=0}^{g-1} \alpha_i=\frac{m}{2}$,
  belong to $S_m^H(\Gamma)$. We denote  by $S_{m, 2}^H(\Gamma)$ this subspace of $S_m^H(\Gamma)$. 
  \end{Lem}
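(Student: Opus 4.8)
The plan is to reduce the claim to two verifications: first that each monomial $f_0^{\alpha_0}\cdots f_{g-1}^{\alpha_{g-1}}$ with $\sum \alpha_i = m/2$ lies in $S_m(\Gamma)$ and in fact in $S_m^H(\Gamma)$, and second that the number of such monomials is $\binom{g+m/2-1}{m/2}$. The second point is purely combinatorial: it counts the monomials of degree $m/2$ in $g$ variables, which is the stars-and-bars count $\binom{g + m/2 - 1}{m/2}$, so I would dispatch it in one sentence.

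For the membership claim, I would proceed by induction on $m/2$ using part (vi) of Lemma~\ref{mhd-9}, namely $S_m^H(\Gamma)\cdot S_n^H(\Gamma)\subset S_{m+n}^H(\Gamma)$, together with part (i), $S_2^H(\Gamma)=S_2(\Gamma)$. The base case $m/2=1$ is exactly part (i): each $f_i$ lies in $S_2(\Gamma)=S_2^H(\Gamma)$. For the inductive step, a monomial of total degree $m/2\ge 2$ can be written as $f_i\cdot(\text{monomial of degree } m/2-1)$ for some $i$ with $\alpha_i\ge 1$; by the inductive hypothesis the second factor lies in $S_{m-2}^H(\Gamma)$, and $f_i\in S_2^H(\Gamma)$, so the product lies in $S_m^H(\Gamma)$ by (vi). That each such product is a genuine cusp form of weight $m$ is automatic since weights add under multiplication and the product of cusp forms vanishes at every cusp. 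Finally I would note that these monomials span a subspace of $S_m^H(\Gamma)$, which we name $S_{m,2}^H(\Gamma)$; no claim of linear independence is made or needed, so there is nothing further to check.

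I do not expect a serious obstacle here, since the essential input — multiplicativity of the spaces $S_m^H(\Gamma)$ — is already packaged in Lemma~\ref{mhd-9}(vi), whose proof in turn rests on \cite[Theorem 2.3.1]{Miyake}. The only point requiring a modicum of care is making sure the induction is set up cleanly (choosing an index $i$ with $\alpha_i\ge 1$, which is possible precisely because the total degree is positive) and that one does not accidentally assert that $S_{m,2}^H(\Gamma)$ equals all of $S_m^H(\Gamma)$ — it need not, and the statement only asserts containment. So the write-up is short: invoke (i) for the base case, (vi) for the inductive step, and quote the stars-and-bars formula for the count.
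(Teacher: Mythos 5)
Your proposal is correct and matches the paper's own argument, which is a one-line appeal to Lemma \ref{mhd-9}(vi) combined with $S_2^H(\Gamma)=S_2(\Gamma)$ from Lemma \ref{mhd-9}(i); your induction on the degree $m/2$ simply spells out the iteration of (vi) that the paper leaves implicit. The stars-and-bars count and the remark that no linear independence is claimed are both fine and require no further justification.
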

\begin{proof} This follows from Lemma \ref{mhd-9} (vi) since $S_2(\Gamma)=S_2^H(\Gamma)$ (see Lemma \ref{mhd-9} (i)).
 \end{proof}

\vskip .2in
The next lemma is crucial for the algorithm.

 \begin{Lem}\label{cts-50000}  Let $m\ge 4$ be an even integer. Assume that  $\mathfrak R_\Gamma$ is not hyperelliptic. Then, we have
   $S_{m, 2}^H(\Gamma)= S_m^H(\Gamma)$.
 \end{Lem}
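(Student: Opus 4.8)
The plan is to recast the asserted equality, through the isomorphism $f\mapsto\omega_f$ of Lemma \ref{mhd-8000}, as the surjectivity of a multiplication map of holomorphic differentials on $\mathfrak R_\Gamma$, and then to recognize that surjectivity as the classical theorem of Max Noether on the projective normality of the canonical curve. Throughout, set $g:=g(\Gamma)$; since $\mathfrak R_\Gamma$ is non-hyperelliptic (and $g\ge 2$ is in force here) we have $g\ge 3$, every curve of genus $2$ being hyperelliptic, so we work in the range $g\ge 3$, $m\ge 4$. By Lemma \ref{cts-5000} the inclusion $S_{m,2}^H(\Gamma)\subseteq S_m^H(\Gamma)$ is already known, and only the reverse inclusion is at issue.

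First I would record the identifications. By Lemma \ref{mhd-9}(i) we have $S_2(\Gamma)=S_2^H(\Gamma)$, and Lemma \ref{mhd-8000} applied with $2$ in place of $m$ gives an isomorphism $S_2(\Gamma)\simeq H^1(\mathfrak R_\Gamma)$, $f\mapsto\omega_f$. The isomorphisms $\mathcal A_m(\Gamma)\to D^{m/2}(\mathfrak R_\Gamma)$ of \cite[Theorem 2.3.1]{Miyake} are multiplicative, $\omega_{fg}=\omega_f\,\omega_g$; this is exactly the fact used in the proofs of Lemma \ref{mhd-9}(iv),(vi). Consequently, if $f_0,\dots,f_{g-1}$ is the basis of $S_2(\Gamma)$ fixed in Lemma \ref{cts-5000}, then $\omega_{f_0},\dots,\omega_{f_{g-1}}$ is a basis of $H^1(\mathfrak R_\Gamma)$, and $f\mapsto\omega_f$ carries $S_{m,2}^H(\Gamma)$ onto the linear span of the degree-$\tfrac m2$ monomials $\omega_{f_0}^{\alpha_0}\cdots\omega_{f_{g-1}}^{\alpha_{g-1}}$ inside $H^{m/2}(\mathfrak R_\Gamma)$, i.e.\ onto the image of the natural multiplication map
$$
\mu_{m/2}\colon\ \mathrm{Sym}^{m/2}H^1(\mathfrak R_\Gamma)\ \longrightarrow\ H^{m/2}(\mathfrak R_\Gamma).
$$
By Lemma \ref{mhd-9}(ii) the same map carries $S_m^H(\Gamma)$ onto all of $H^{m/2}(\mathfrak R_\Gamma)$. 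Hence $S_{m,2}^H(\Gamma)=S_m^H(\Gamma)$ is equivalent to the surjectivity of $\mu_{m/2}$.

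It then remains to show that $\mu_k$ is surjective for every $k=m/2\ge 2$ on the non-hyperelliptic curve $\mathfrak R_\Gamma$. This is precisely the statement that the canonical ring $\bigoplus_{k\ge 0}H^k(\mathfrak R_\Gamma)$ is generated in degree one — equivalently, that the canonically embedded curve is projectively normal — which is \emph{Max Noether's theorem}; I would invoke it, with a reference to a standard treatment of the canonical curve. If a self-contained argument is preferred, it suffices to treat $k=2$ (the case $m=4$) and then induct on $k$: for $k\ge 3$ the divisor $(k-1)K$ has degree $(k-1)(2g-2)\ge 2g+1$ (using $g\ge 3$), so by standard results on multiplication maps of line bundles on a curve — concretely, the base-point-free pencil trick applied to a pencil in the very ample linear system $|K|$ — the map $H^{k-1}(\mathfrak R_\Gamma)\otimes H^1(\mathfrak R_\Gamma)\to H^k(\mathfrak R_\Gamma)$ is surjective, and composing with the inductively known surjection $\mu_{k-1}$ gives $\mu_k$. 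The main obstacle is therefore the base case $m=4$, namely the surjectivity of $\mathrm{Sym}^2H^1(\mathfrak R_\Gamma)\to H^2(\mathfrak R_\Gamma)$; this is Noether's theorem proper and is the one place where non-hyperellipticity is genuinely used. (For hyperelliptic $\mathfrak R_\Gamma$ the image of $\mu_2$ is the space of differentials pulled back from $\mathbb P^1$, of dimension $2g-1<3g-3=\dim H^2(\mathfrak R_\Gamma)$ once $g\ge 3$, so there $S_{4,2}^H(\Gamma)\subsetneq S_4^H(\Gamma)$ and the hypothesis is essential.)
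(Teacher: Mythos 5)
Your proposal is correct and follows essentially the same route as the paper: both translate the statement into the surjectivity of the multiplication map $\mathrm{Sym}^{m/2}H^0(K)\to H^0\bigl(\tfrac m2 K\bigr)$ on the canonical ring of $\mathfrak R_\Gamma$ (the paper via the explicit identification $L\bigl(\tfrac m2 K\bigr)=\{f/F^{m/2}:\ f\in S_m^H(\Gamma)\}$ proved by a divisor computation and Riemann--Roch, you via the multiplicativity of $f\mapsto\omega_f$ together with Lemma \ref{mhd-8000}) and both conclude by invoking Max Noether's theorem, exactly as the paper does with the reference to Miranda, Chapter VII, Corollary 3.27. One caveat on your optional self-contained induction: the dimension count in the base-point-free pencil trick is off by one at $k=3$ (the image of $V\otimes H^0(2K)$ for a pencil $V\subset H^0(K)$ has dimension $2(3g-3)-g=5g-6$, while $h^0(3K)=5g-5$), so that case requires the full space $H^0(K)$ or a general-position argument rather than a single pencil --- but since your primary argument simply cites Noether's theorem, this does not affect the correctness of the proof.
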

 \begin{proof} We use notation of Section \ref{mhd} freely. The reader should review Lemma \ref{mhd-9}.
    Let $F\in S_2(\Gamma)$, $F\neq 0$.  We define a holomorphic differential form $\omega \in H\left(\mathfrak R_\Gamma\right)$
    by $\omega=\omega_F$. Define a canonical class $K$ by $K=\mathrm{div}{(\omega)}$. We prove the following:
    \begin{equation}\label{cts-2}
  L\left(\frac{m}{2}K\right)=\left\{\frac{f}{F^{m/2}}; \ \ f\in S_m^H(\Gamma) \right\}.
    \end{equation}
The case $m=2$ is of course well--known.   By  the Riemann-Roch theorem and standard results recalled in Section \ref{mhd} we have
  \begin{align*}
    \dim L\left(\frac{m}{2}K\right) &= \deg{\left(\frac{m}{2}K\right)}-g(\Gamma)+1 + \dim L\left(\left(1- \frac{m}{2}\right)K\right)\\
    &=   (m-1)(g(\Gamma)-1)+\begin{cases} 1 \ \ \text{if} \ \ m=2;\\
                                          0 \ \ \text{if} \ \ m\ge 4.\\
                             \end{cases}           
     \end{align*}

Next,  we recall that $S_2(\Gamma)=S_2^H(\Gamma)$  (see Lemma \ref{mhd-9} (i)). Then, Lemma \ref{mhd-9} (vi) we obtain 
$F^{\frac{m}{2}}\in S_m^H(\Gamma)$. Therefore,  $f/F^{\frac{m}{2}}\in  \mathbb C\left(\mathfrak R_\Gamma\right)$ for all $f\in S_m^H(\Gamma)$.

By the correspondence described in (\ref{mhd-5}) we have

\begin{align*}
\mathrm{div}{(F)}=\mathrm{div}{(\omega_F)}+\sum_{\mathfrak a \in \mathfrak R_\Gamma} \left(1- \frac{1}{e_{\mathfrak a}}\right) \mathfrak a &=
K+ \sum_{\mathfrak a \in \mathfrak R_\Gamma} \left(1- \frac{1}{e_{\mathfrak a}}\right) \mathfrak a\\
&= K+  \sum_{\mathfrak a\in \mathfrak R_\Gamma, \ \ elliptic} (1-1/e_{\mathfrak a})  \mathfrak a + \sum_{\substack{\mathfrak b \in \mathfrak R_\Gamma, \\ cusp}} \mathfrak b.
\end{align*}

Thus, for  $f\in S_m^H(\Gamma)$, we have the following:
\begin{align*}
  \mathrm{div}{\left(\frac{f}{F^{\frac{m}{2}}}\right)} + \frac{m}{2}K &= \mathrm{div}{(f)}- \frac{m}{2}\mathrm{div}{(F)}  + \frac{m}{2}K\\
  &= \mathrm{div}{(f)}-\frac{m}{2}\sum_{\mathfrak a\in \mathfrak R_\Gamma, \ \ elliptic} (1-1/e_{\mathfrak a}) \mathfrak a-\frac{m}{2}
  \sum_{\substack{\mathfrak b \in \mathfrak R_\Gamma, \\ cusp}} \mathfrak b
\end{align*}
Next, using \cite[Lemma 2.2]{MuicKodrnja} (see (\ref{mhd-60000000000000000}) in this paper), the right--hand side becomes
$$
\mathfrak c'_f - \sum_{\mathfrak a\in \mathfrak
R_\Gamma, \ \ elliptic}  \left[\frac{m}{2}(1-1/e_{\mathfrak  a})\right]\mathfrak a - \frac{m}{2}
  \sum_{\substack{\mathfrak b \in \mathfrak R_\Gamma, \\ cusp}} \mathfrak b \ge 0
$$
  by the definition of $S_m^H(\Gamma)$. Hence,  $f/F^{\frac{m}{2}}\in L\left(\frac{m}{2}K\right)$. Now, comparing the dimensions  in (\ref{cts-2}), we obtain their equality.
This proves (\ref{cts-2}).

Next, let $W$ be any finite dimensional $\mathbb C$--vector space. Let $\text{Symm}^k(W)$ be symmetric tensors of degree $k\ge 1$.
Then, by Max Noether theorem (\cite{Miranda}, Chapter VII, Corollary 3.27) the multiplication induces surjective map
$\text{Symm}^k{\left(L\left(K\right)\right)}\twoheadrightarrow L\left(\frac{m}{2}K\right)$.
The lemma follows.
\end{proof}

 \vskip .2in
 Now, after all of these preparations, we come to the main result of the paper.  It gives a good criterion 
  for {\bf testing} that  $\mathfrak a_\infty$ is  a  $\frac{m}{2}$--Weierstrass point for $m\ge 4$.
 We give examples of explicit computations below.

 \begin{Thm}\label{cts-500000}  Let $m\ge 4$ be an even integer. Assume that  $\mathfrak R_\Gamma$ is not hyperelliptic. Assume that $\mathfrak a_\infty$ is a cusp for $\Gamma$.
   Let us select a basis $f_0, \ldots, f_{g-1}$, $g=g(\Gamma)$, of $S_2(\Gamma)$ as listed by their $q$--expansions using SAGE system. Compute $q$--expansions of all monomials
   $$
   f_0^{\alpha_0}f_1^{\alpha_1}\cdots f_{g-1}^{\alpha_{g-1}}, \ \  \alpha_i\in \mathbb Z_{\ge 0}, \ \sum_{i=0}^{g-1} \alpha_i=\frac{m}{2}.
   $$
   Then,  $\mathfrak a_\infty$ is {\bf not} a   $\frac{m}{2}$--Weierstrass point if and only if there exist  $\mathbb C$--linear combinations of such monomials,  say 
   $F_1, \ldots F_t$, $t= (m-1)(g-1)$, such that their $q$--expansions are of the form
      $$
      F_u=a_uq^{u+m/2-1}+ \text{higher order terms in $q$}, \ \ 1\le u\le t,
      $$
      where 
      $$
      a_u\in \mathbb C, \ \  a_u\neq 0.
      $$
\end{Thm}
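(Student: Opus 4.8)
The plan is to combine Lemma \ref{cts-50000} with Lemma \ref{mhd-9} (x) applied to the space $S_m^H(\Gamma)$. By Lemma \ref{mhd-9} (x), since $g(\Gamma) \ge 2 \ge 1$ and $\mathfrak a_\infty$ is a $\Gamma$-cusp, the point $\mathfrak a_\infty$ is \textbf{not} a $\frac{m}{2}$-Weierstrass point if and only if there exists a basis $f_1, \ldots, f_t$ of $S_m^H(\Gamma)$, with $t = \dim S_m^H(\Gamma) = (m-1)(g-1)$ by Lemma \ref{mhd-9} (v), whose $q$-expansions have leading terms $a_u q^{u + m/2 - 1}$ with $a_u \neq 0$ for $1 \le u \le t$. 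So it suffices to show that this basis, when it exists, can always be taken inside the subspace $S_{m,2}^H(\Gamma)$ spanned by the monomials $f_0^{\alpha_0}\cdots f_{g-1}^{\alpha_{g-1}}$ with $\sum \alpha_i = m/2$, and conversely that a family $F_1, \ldots, F_t$ of $\mathbb C$-linear combinations of monomials with the stated leading terms is automatically a basis of $S_m^H(\Gamma)$.

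The key step is Lemma \ref{cts-50000}, which gives $S_{m,2}^H(\Gamma) = S_m^H(\Gamma)$ precisely under the non-hyperelliptic hypothesis. Hence every element of $S_m^H(\Gamma)$ is already a $\mathbb C$-linear combination of the monomials in the $f_i$, and there is no loss in restricting attention to such combinations. First I would invoke this equality to identify the abstract space in Lemma \ref{mhd-9} (x) with the concrete span of monomials. Then, if $\mathfrak a_\infty$ is not a $\frac{m}{2}$-Weierstrass point, Lemma \ref{mhd-9} (x) supplies a basis $f_1,\ldots,f_t$ of $S_m^H(\Gamma) = S_{m,2}^H(\Gamma)$ with the required leading terms, and each $f_u$ is expressible as a $\mathbb C$-linear combination of the monomials — take $F_u = f_u$. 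Conversely, given linear combinations $F_1, \ldots, F_t$ of the monomials with $q$-expansions $a_u q^{u + m/2 - 1} + \cdots$, $a_u \neq 0$: since $F_u \in S_{m,2}^H(\Gamma) = S_m^H(\Gamma)$ and their leading exponents $m/2, m/2+1, \ldots, m/2 + t - 1$ are distinct, the $F_u$ are linearly independent, hence form a basis of the $t$-dimensional space $S_m^H(\Gamma)$; Lemma \ref{mhd-9} (x) then gives that $\mathfrak a_\infty$ is not a $\frac{m}{2}$-Weierstrass point.

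I expect the main obstacle to be bookkeeping rather than anything deep: one must be careful that the $q$-expansions of the monomials $f_0^{\alpha_0}\cdots f_{g-1}^{\alpha_{g-1}}$ genuinely range over a space that, after row reduction on the matrix of leading coefficients, can realize all leading exponents $m/2, \ldots, m/2 + t - 1$ exactly when no Weierstrass condition obstructs it — but this is exactly the content of Lemma \ref{mhd-9} (ix)--(x) transported through the isomorphism $S_{m,2}^H(\Gamma) = S_m^H(\Gamma)$, so no new argument is needed. A secondary point worth a sentence is that the number of monomials is $\binom{g + m/2 - 1}{m/2}$, which is in general larger than $t = (m-1)(g-1)$, so the $F_u$ are nontrivial linear combinations and the Gaussian-elimination step on $q$-expansions is where the algorithm does its work; the theorem merely certifies that this elimination, performed in SAGE, detects the Weierstrass property correctly. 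This completes the plan.
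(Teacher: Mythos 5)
Your proposal is correct and follows exactly the paper's own route: the paper's proof is the one-line "combine Lemma \ref{cts-50000} and Lemma \ref{mhd-9} (x)," and you have simply spelled out the details of that combination (identifying the span of monomials with $S_m^H(\Gamma)$ via the non-hyperelliptic hypothesis, and using distinct leading exponents plus the dimension count to get a basis). No discrepancies.
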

\begin{proof}  We combine Lemmas \ref{cts-50000} and \ref{mhd-9} (x).
  \end{proof}

We make the method of  Theorem \ref{cts-500000} more explicit as follows for $\Gamma=\Gamma_0(N)$. First, 
the number of monomials is
   $$
   \binom{g+m/2-1}{m/2}.
   $$
Then, by selecting the first $m/2+m \cdot (g-1)$ terms from $q$--expansions of the monomials, we can create the matrix of size  
   $$
   \binom{g+m/2-1}{m/2} \times \left(\frac{m}{2}+m \cdot (g-1)\right).
   $$ 
Then, we perform suitable integral Gaussian elimination method to transform the matrix into row echelon form.
The procedure is as follows. We successively sort and transform the row matrices to cancel the leading row coefficients with the same number of leading zeros as their predecessor.
We use the {\it Quicksort algorithm} for sorting. We obtain  the transformed matrix and the transformation matrix.
The non-null rows of the transformed matrix give the $q$-expansions of the basis elements of $S_m^H(\Gamma)$, and the corresponding rows of the transformation matrix give the corresponding
linear combinations of monomials. Using this method, we check the following from Theorem \ref{cts-500000}.

Let $m=4$. Then, for $X_0(34)$  the basis of   $S_4^H(\Gamma_0(34))$ is given by 
\begin{align*}
f_0^2 & =q^{2}-4q^{5}-4q^{6}+12q^{8}+12q^{9}-2q^{10} \\
f_0f_1& = q^{3}-q^{5}-2q^{6}-2q^{7}+2q^{8}+5q^{9}+2q^{10} \\
f_0f_2 & = q^{4}-2q^{5}-q^{6}-q^{7}+6q^{8}+6q^{9}+2q^{10} \\
 -f_1^2 + f_0f_2& = -2q^{5}+q^{6}-q^{7}+5q^{8}+6q^{9}+4q^{10} \\
-f_1^2 + f_0f_2 + 2f_1f_2& =-3q^{6}-5q^{7}+11q^{8}+16q^{9}+2q^{10} \\
-f_1^2 + f_0f_2 + 2f_1f_2 + 3f_2^2 &= -17q^{7}+17q^{8}+34q^{9}+17q^{10} \\
\end{align*}
Their first exponents  are $\frac{m}{2}=2, 3, 4, 5, 6, \frac{m}{2}+ (m-1)(g-1)-1=7$ which shows that
$\mathfrak a_\infty$ is not $2$--Weierstrass point for $X_0(34)$.

For $X_0(55)$,  the basis of   $S_4^H(\Gamma_0(55))$ is given by
\begin{align*}
 f_0^2 & = q^{2}-2q^{8}+\cdots  \\
 f_0f_1& = q^{3}-2q^{7}+ \cdots  \\
 f_0f_2&= q^{4}-2q^{7}+ \cdots\\
  f_0f_3& =q^{5}-2q^{7} + \cdots\\
 f_0f_4 &=q^{6}-2q^{11} + \cdots\\
 -f_1f_2 + f_0f_3 &=-2q^{7}+q^{8} + \cdots\\
 -f_1f_2 + f_0f_3 + 2f_2f_3  &=q^{8}+2q^{9} + \cdots\\
  -f_1f_2 + f_0f_3 + 2f_2f_3 - f_3^2 &=2q^{9}-q^{10} + \cdots\\
  -f_1f_2 + f_0f_3 + 2f_2f_3 - f_3^2 - 2f_3f_4 & =-q^{10}+11q^{12} + \cdots\\
 -f_1f_2 + f_0f_3 + 2f_2f_3 - f_3^2 - 2f_3f_4 + f_4^2 & =11q^{12}-11q^{13} + \cdots\\
 -f_1f_2 - f_2^2 + f_0f_3 + 2f_2f_3 - f_3^2 + f_0f_4 - 6f_3f_4 - f_4^2& =-22q^{13}+44q^{15}+ \cdots\\
 -f_2^2 + f_3^2 + f_0f_4 - f_2f_4 - 4f_3f_4 + 2f_4^2 &=-22q^{14}+22q^{15}+ \cdots\\
\end{align*}
The last exponent is $14>  \frac{m}{2}+ (m-1)(g-1)-1=13$. So,  $\mathfrak a_\infty$ is a
$2$--Weierstrass point for $X_0(55)$.

   \vskip .2in
We end the section with the following remark. When $\mathfrak R_\Gamma$ is hyperelliptic,  $S_{m, 2}^H(\Gamma)$ could be   a  proper subspace of $S_{m}^H(\Gamma)$.
For example,  assume that $g(\Gamma)= 2$. Let $f_0, f_1$ be a basis of $S_2(\Gamma)$. Then, for any even integer $m\ge 4$,
$f_0^uf_1^{\frac{m}{2}-u}$, $0\le u \le m$ is a basis of $S_{m, 2}^H(\Gamma)$. Therefore,
  $$
  \dim{S_{m}^H(\Gamma)}= (m-1)(g(\Gamma)-1)= m-1\ge  \frac{m}{2}+1=  \dim{S_{m, 2}^H(\Gamma)}, \ \ \text{for} \ m\ge 4.
  $$
 Thus,  $S_{4, 2}^H(\Gamma)= S_{4}^H(\Gamma)$ while $S_{m, 2}^H(\Gamma)\subsetneq S_{m}^H(\Gamma)$ for $m\ge 6$.

\end{document}